\theoremstyle{plain}
\newtheorem{thm}{\protect\theoremname}
\theoremstyle{plain}
\newtheorem{fact}[thm]{\protect\factname}
\theoremstyle{plain}
\newtheorem{lem}[thm]{\protect\lemmaname}
\theoremstyle{plain}
\newtheorem{rem}[thm]{Remark}
\newcommand{\C}{\mathbb {C}}
\newcommand{\R}{\mathbb R}
\newcommand{\N}{\mathbb {N}}
\def\Lou{{L^{\!\rm ou}}} 
\def\ptLou{{P_t^{\rm ou}}}
\def\ptL{{P_t}}
\providecommand{\factname}{Fact}
\providecommand{\lemmaname}{Lemma}
\providecommand{\theoremname}{Theorem}
\begin{document}
\title{A Gaussian correlation inequality \\ for plurisubharmonic functions}
\author{F. Barthe and D. Cordero-Erausquin}
\maketitle

\begin{abstract}
    A positive correlation inequality is established for circular-invariant plurisubharmonic functions, with respect to complex Gaussian measures. The main  ingredients of the proofs are the Ornstein-Uhlenbeck semigroup, and another natural semigroup associated to the Gaussian $\overline\partial$-Laplacian.
\end{abstract}

\section{Introduction}

The motivation of the present work comes from a Gaussian moment inequality
in $\C^{n}$ due to Arias de Reyna \cite{Arias}. We will show that
his result is in fact a very particular case of a new correlation
inequality, which can be seen as the complex analogue of the following
correlation inequality for convex function in $\R^{n}$ due to Hu
\cite{Hu}: if $\mu$ is a centered Gaussian measure on $\R^n$ and if  $f,g:\R^{n}\to\R$ are \emph{convex} functions in $L^2(\mu)$ and
$f$ is \emph{even}, then
\[
\int f\,g\,d\mu\ge\int f\,d\mu\,\int g\,d\mu.
\]

We will say that a function on $\C^{n}$ is \emph{circular-symmetric} if it
is invariant under the action of $S^{1}$ (i.e. multiplication by
 complex numbers of modulus one) ; in other words  a function $f$ defined on $\C^{n}$ 
is circular-symmetric if
\[
f(e^{i\theta}w)=f(w)\qquad\forall\theta\in\R,\quad\forall w\in\C^{n}.
\]
A function $u:\C^n\to [-\infty,+\infty)$ is plurisubharmonic (psh) if it is upper semi-continuous and for all $a,b\in \C^n$ the function $z\in \C \mapsto u(a+zb)$ is subharmonic. Classically, a twice continuously differentiable function $u:\C^n\to \R$ is psh if and only if for all $w,z\in \C^n$
\[ \sum_{j,k} \partial^2_{z_j\overline{z_k}}u(z) \,w_j\overline{w_k}\ge 0,\]
where $\partial_{z_j}=\frac12\big(\partial_{x_j}-i\partial_{y_j} \big)$, $\partial_{\overline{z_j}}=\frac12\big(\partial_{x_j}+i\partial_{y_j} \big)$, $z=x+iy$ with $x,y\in \R^n$.
The later condition means that the complex Hessian $D^2_{\C}u$ is pointwize Hermitian semi-definite positive. 
We refer e.g. to the textbook \cite{hormander} for more details.

We consider the standard complex Gaussian measure $\gamma$ on $\C^n$, 
\[
d\gamma(w)=d\gamma_n(w)=\pi^{-n}e^{-w\cdot\overline{w}}d\ell(w)=\pi^{-n}e^{-|w|^{2}}d\ell(w),
\]
where $\ell$ denotes the Lebesgue measure on $\C^{n}\simeq\R^{2n}$
and $w\cdot w'=\sum w_{j}w'_{j}$ for $w,w'\in\C^{n}.$ For convenience,
let us introduce the following class of $L^{2}(\gamma)$ functions
with controlled growth at infinity: 
\[
\mathcal{G}:=\Big\{f:\C^{n}\to\C\,;\,f\in L_{loc}^{2}(\lambda)\,{\rm and}\,\exists\epsilon,c,C>0\,{\rm such\,\,that}\,|f(w)|\le\,e^{c|w|^{2-\epsilon}},\,\forall|w|\ge C\Big\}.
\]
 In particular any function (locally $L^{2}$) dominated by a polynomial
function on $\R^{2n}$ belongs to $\mathcal{G}$. Our main result reads as follows:
\begin{thm}[Correlation for psh functions]
\label{thm:main}Let $f,g:\C^{n}\to[-\infty,\infty)$ be two plurisubharmonic
 functions belonging to $\mathcal{G}$. If $f$ is 
 \emph{circular-symmetric},
then
\[
\int f\,g\,d\gamma\ge\int f\,d\gamma\,\int g\,d\gamma.
\]
\end{thm}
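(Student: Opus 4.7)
The plan combines two reductions, a semigroup interpolation, and a key integration-by-parts step that uses circular symmetry.

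First, by brief evolution under the Ornstein-Uhlenbeck semigroup (which, via the Mehler formula $\ptLou f(w)=\int f(e^{-t}w+\sqrt{1-e^{-2t}}v)\,d\gamma(v)$, preserves plurisubharmonicity---since $z\mapsto f(A+zB)$ remains subharmonic for each $v$---and preserves circular symmetry by rotation-invariance of $\gamma$) followed by truncation, I reduce to smooth, bounded $f,g\in \mathcal{G}$ with $f$ still circular-symmetric psh. I further assume $g$ circular-symmetric: replacing $g$ by its $S^1$-average $\tilde g(w):=\frac{1}{2\pi}\int_0^{2\pi}g(e^{i\theta}w)\,d\theta$, which is again psh (averages of psh functions are psh) and now circular-symmetric, leaves both sides of the desired inequality unchanged, as $f$ is circular-symmetric and $\gamma$ is rotation-invariant.

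Next, I interpolate via $\ptLou$, which is $L^2(\gamma)$-self-adjoint and satisfies $\ptLou f\to\int f\,d\gamma$ as $t\to\infty$. The carré-du-champ identity yields
\[
\int fg\,d\gamma-\int f\,d\gamma\int g\,d\gamma=\tfrac{1}{2}\int_0^\infty\int \langle\nabla \ptLou f,\nabla g\rangle\,d\gamma\,dt.
\]
Since $\ptLou$ preserves psh and circular symmetry, $F:=\ptLou f$ is smooth, bounded, circular-symmetric, psh. The complex-coordinate identity $\langle\nabla F,\nabla g\rangle=4\,\mathrm{Re}\sum_j\partial_{z_j}F\,\partial_{\bar z_j}g$ reduces the theorem to the positivity
\[
\mathrm{Re}\int\sum_j\partial_{z_j}F\cdot\partial_{\bar z_j}g\,d\gamma\ge 0
\]
for smooth, bounded, circular-symmetric psh $F$ and $g$.

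For this last step, the plan is to invoke the second semigroup from the abstract---$\ptL:=e^{-t\Box}$ associated to the Gaussian $\bar\partial$-Laplacian $\Box=\bar\partial^{*}\bar\partial$. Via the complex Stein identity $\int\bar z_j h\,d\gamma=\int\partial_{z_j}h\,d\gamma$, the integrand rewrites as $\int F\,\Box g\,d\gamma$, and iterating the integration by parts exposes the complex Hessians $D^{2}_{\C}F$ and $D^{2}_{\C}g$, both Hermitian PSD by plurisubharmonicity. The circular symmetry of $F$ supplies the Euler relation $\sum_j w_j\partial_{z_j}F=\sum_j\bar w_j\partial_{\bar z_j}F$, forcing a specific Hermitian structure on $\partial F$ relative to $w$; when paired against $D^{2}_{\C}g$, one should obtain a trace-like pairing of PSD matrices that is nonneg, either pointwise after averaging or after a further semigroup step. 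The main obstacle is precisely this final algebraic-analytic identification: a naive pairing such as the Jordan product $UV+VU$ of two Hermitian PSD matrices need not be PSD in general, so circular symmetry has to be exploited carefully to extract the correct PSD representation; this is also where the analogy with Hu's convex inequality (whose conclusion can fail without evenness) suggests the symmetry assumption is not removable. Once this positivity is established for smooth inputs, the theorem follows by approximation within the class $\mathcal{G}$.
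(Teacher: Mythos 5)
Your setup is fine as far as it goes (the OU interpolation, the identity $\langle\nabla F,\nabla g\rangle=4\,\mathrm{Re}\sum_j\partial_{z_j}F\,\partial_{\overline{z_j}}g$, and the reduction to showing $\mathrm{Re}\int\sum_j\partial_{z_j}(\ptLou f)\,\partial_{\overline{z_j}}g\,d\gamma\ge0$ for each $t$ --- this is exactly the statement $-\alpha'(t)\ge0$ with $\alpha(t)=\int(\ptLou f)g\,d\gamma$, which is the heart of the matter). But that is precisely where your proposal stops being a proof: the asserted route ``the integrand rewrites as $\int F\,\Box g\,d\gamma$, and iterating the integration by parts exposes the complex Hessians $D^2_{\C}F$ and $D^2_{\C}g$'' does not work as a static computation. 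A second integration by parts of $\int F\,\Box g\,d\gamma$ against the Gaussian weight produces, because of the drift term $\sum_j\overline{z_j}\partial_{\overline{z_j}}$ in $\Box=-L$, first-order terms with no sign, and there is no pointwise-in-$t$ identity expressing $-\alpha'(t)$ as $\int\mathrm{Tr}\big(D^2_{\C}F\,D^2_{\C}g\big)\,d\gamma$; indeed the paper's concluding discussion makes clear that $-\alpha'(t)$ is \emph{not} a single Hessian pairing but an integral $\int_t^\infty$ of such pairings. You acknowledge this yourself (``the main obstacle is precisely this final algebraic-analytic identification''), so the key positivity is left unproven, and the Euler relation $\sum_j w_j\partial_{z_j}F=\sum_j\overline{w_j}\partial_{\overline{z_j}}F$ by itself does not supply it.

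The missing mechanism in the paper is genuinely dynamic: one differentiates $\alpha$ a \emph{second} time, but along the semigroup $\ptL=e^{tL}$ of the Gaussian $\overline\partial$-Laplacian rather than along $\ptLou$. Three ingredients make this work: (i) for circular-symmetric $f$ one has $Lf=\Lou f$ and $\ptL f=\ptLou f$, so the two flows coincide on $f$ and $\ptL f$ stays psh; (ii) the exact commutation $\partial_{z_j}\ptL=\ptL\partial_{z_j}$ (with no damping factor), which after one integration by parts turns $\alpha''(t)$ into $\int\mathrm{Tr}\big(D^2_{\C}\ptL f\;D^2_{\C}g\big)\,d\gamma\ge0$, the trace of a product of two Hermitian PSD matrices; (iii) the conclusion $\alpha'\le0$ then follows not pointwise but from convexity of $\alpha$ together with its finite limit at $+\infty$ (equivalently $-\alpha'(t)=\int_t^\infty\alpha''(s)\,ds$). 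None of these steps appears in your outline, so there is a real gap, not just a technical one. Two smaller remarks: your truncation step is suspect, since $\min(f,C)$ is not psh (work in $\mathcal G^\infty$ instead, as boundedness is never needed); and your reduction to circular-symmetric $g$ by $S^1$-averaging is correct but does not simplify the crucial positivity, which is why the paper does not use it.
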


One can extend the result by approximation to more general 
psh functions in $L^{2}(\gamma)$. The inequality also extends to arbitrary centered complex Gaussian measure, which are images of $\gamma$ by $\C$-linear maps. Indeed composing a psh function with a $\C$-linear map gives another psh function.

Let us give some direct consequences of this theorem. First, we see
that when $F$, G are holomorphic functions, or simply complex polynomial
functions, belonging to $L^{2}(\gamma)$ and $F$ is homogeneous,
then for any $\alpha,\beta\ge0$ we have 
\[
\int|F|^{\alpha}\,|G|^{\beta}\,d\gamma\ge\int|F|^{\alpha}\,d\gamma\,\int|G|^{\beta}\,d\gamma.
\]
 Indeed, if $F$ is holomorphic $f=|F|^{\beta}$ is psh, and if $F$
is homogeneous, then $f$ is also circular-symmetric. This argument can also
be used for products of the form 
\[
f:=|F_{1}|^{\alpha_{1}}\ldots|F_{k}|^{\alpha_{k}}
\]
where the $F_{j}$ are holomorphic and the $\alpha_{j}$'s are nonnegative
real numbers, so that $f$ is log-psh, in the sense that
\[
\log f(w)=\sum_{\ell=1}^{k}\alpha_{\ell}\,\log|F_{\ell}(w)|
\]
is psh. This implies that $f$ is also psh, and if the holomorphic
functions $F_{j}$ are homogeneous then $f$ is also circular-symmetric. 
\begin{thm}
Let $F_{1,}\ldots,F_{N}$ be a family of homogeneous polymomial functions on $\C^n$.
Then for any $\alpha,\ldots,\alpha_{N}\ge0$
and $k\le N-1$ we have 
\[
\int\prod_{j=1}^{N}|F_{j}|^{\alpha_{j}}\,d\gamma\ge\Big(\int\prod_{j=1}^{k}|F_{j}|^{\alpha_{j}}\,d\gamma\Big)\Big(\int\prod_{j=k+1}^{N}|F_{j}|^{\alpha_{j}}\,d\gamma\Big)\ge\prod_{j=1}^{N}\int|F_{j}|^{\alpha_{j}}\,d\gamma.
\]
\end{thm}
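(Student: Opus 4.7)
The plan is to deduce the theorem by iterating Theorem~\ref{thm:main}. The whole argument reduces to checking that for any nonempty $S\subset\{1,\dots,N\}$, the partial product
\[
h_S := \prod_{j\in S} |F_j|^{\alpha_j}
\]
satisfies the hypotheses of Theorem~\ref{thm:main}. For plurisubharmonicity I would invoke exactly the argument recalled just above the statement: $\log h_S = \sum_{j\in S} \alpha_j \log |F_j|$ is a sum with nonnegative coefficients of psh functions (each $F_j$ being holomorphic), so $h_S$ is log-psh, and therefore psh. Circular symmetry follows from homogeneity: if $d_j := \deg F_j$, then $F_j(e^{i\theta}w) = e^{id_j\theta} F_j(w)$, so $|F_j(e^{i\theta}w)| = |F_j(w)|$, and $h_S$ inherits the invariance. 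Membership in $\mathcal{G}$ is immediate since each $|F_j|^{\alpha_j}$, hence $h_S$, is dominated by a polynomial in $|w|$.

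With these checks in hand, the first inequality is a single application of Theorem~\ref{thm:main} to
\[
f := \prod_{j=1}^k |F_j|^{\alpha_j}, \qquad g := \prod_{j=k+1}^N |F_j|^{\alpha_j},
\]
both psh, both in $\mathcal{G}$, and with $f$ circular-symmetric. For the second inequality I would iterate the same idea: given any $S\subset\{1,\dots,N\}$ with $|S|\ge 2$ and any $j_0 \in S$, apply Theorem~\ref{thm:main} with $f = |F_{j_0}|^{\alpha_{j_0}}$ (circular-symmetric psh) and $g = h_{S\setminus\{j_0\}}$ (psh) to obtain
\[
\int h_S\, d\gamma \;\ge\; \int |F_{j_0}|^{\alpha_{j_0}}\, d\gamma \cdot \int h_{S\setminus\{j_0\}}\, d\gamma,
\]
and recurse on $S\setminus\{j_0\}$. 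Peeling off one factor at a time from both $S=\{1,\dots,k\}$ and $S=\{k+1,\dots,N\}$ gives the claimed lower bound $\prod_{j=1}^N \int |F_j|^{\alpha_j}\, d\gamma$. Equivalently one can phrase this as an induction on $N$, with Theorem~\ref{thm:main} as the inductive step.

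I do not foresee any genuine obstacle: the proof is bookkeeping on top of Theorem~\ref{thm:main}. The only point worth a moment's care is that the product of two psh functions is not psh in general; it works here precisely because the factors are nonnegative and log-psh, so their product is again log-psh and therefore psh, which keeps the hypotheses of Theorem~\ref{thm:main} available at each step of the iteration.
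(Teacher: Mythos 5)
Your proposal is correct and is essentially the paper's own argument: the paper deduces this theorem from Theorem \ref{thm:main} via exactly the observation you make, namely that $\prod_j |F_j|^{\alpha_j}$ is log-psh (hence psh), lies in $\mathcal{G}$, and is circular-symmetric when the $F_j$ are homogeneous, and then the second inequality follows by iterating the correlation inequality one factor at a time. Nothing further is needed.
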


A standard complex Gaussian vector in $\C^n$ is a random vector taking values in $\C^n$ according to the distribution $\gamma=\gamma_n$. A random vector $X=(X_{1},\ldots,X_{N})\in \C^N$ is a centered complex Gaussian vector if there is an $n$, a standard complex Gaussian vector $G$ in $\C^n$ and a $\C$-linear map $A:\C^n\to\C^N$ such that $X=AG$. It turns out that the law for $X$ is then characterized by its complex covariance matrix $\big(\mathbb E (X_k\overline{X_\ell})\big)_{1\le k,\ell\le N}$.  
Denoting by
$a_{1,}\ldots a_{N}\in\C^{n}$ the rows the matrix of $A$ in the canonical basis,  $X_{j}=G\cdot a_{j}$. Applying the later theorem to 
the complex linear forms $F_{j}(w)=w\cdot a_{j}$ yields the following result.
\begin{thm}
Let $(X_{1},\ldots,X_{N})\in\C^{N}$ be a centered complex Gaussian
vector, and let $\alpha_{1},\ldots,\alpha_{N}\in\R^{+}$. Then, for
any $k\le N-1$

\begin{equation}
\mathbb{E}\prod_{j=1}^{N}|X_{j}|^{\alpha_{j}}\ge\Big(\mathbb{E}\prod_{j=1}^{k}|X_{j}|^{\alpha_{j}}\Big)\Big(\mathbb{E}\prod_{j=k+1}^{N}|X_{j}|^{\alpha_{j}}\Big)\label{eq:moments}
\end{equation}
 and in particular
\begin{equation}
\mathbb{E}\prod_{j=1}^{N}|X_{j}|^{\alpha_{j}}\ge\prod_{j=1}^{N}\mathbb{E}\,|X_{j}|^{\alpha_{j}}.\label{eq:genAR}
\end{equation}
In other words, among centered complex Gaussian vectors $(X_{1},\ldots,X_{N})\in\C^{N}$
with fixed diagonal covariance (i.e. $(\mathbb{E}|X_{j}|^{2})_{j\le N}$
fixed) the expectation of $\prod_{j=1}^{N}|X_{j}|^{\alpha_{j}}$ is
minimal when the variables are independent. 
\end{thm}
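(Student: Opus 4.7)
The plan is to recognize this theorem as a direct probabilistic reformulation of the previous one. By the definition of a centered complex Gaussian vector, I may write $X = AG$ where $G \in \C^n$ has law $\gamma_n$ and $A:\C^n \to \C^N$ is $\C$-linear; denoting by $a_1,\ldots,a_N \in \C^n$ the rows of the matrix of $A$ in the canonical basis, one has $X_j = G\cdot a_j$, exactly as recalled in the paragraph preceding the statement.

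Next I would introduce the complex linear forms $F_j(w) := w \cdot a_j$ on $\C^n$, which are homogeneous polynomials of degree one. Since each $|F_j|^{\alpha_j}$ has at most polynomial growth, any finite product of such functions lies in $\mathcal G$, so the hypotheses of the previous theorem are satisfied. Because $G$ has law $\gamma$, for any subset $J \subseteq \{1,\ldots,N\}$,
\[
\mathbb{E}\prod_{j \in J} |X_j|^{\alpha_j} \;=\; \int \prod_{j \in J} |F_j(w)|^{\alpha_j} \, d\gamma(w).
\]
Applying the previous theorem to the family $F_1, \ldots, F_N$ with the chosen splitting index $k$ then gives \eqref{eq:moments} (its first inequality) and, by combining both of its inequalities (or by iterating \eqref{eq:moments}), the bound \eqref{eq:genAR}.

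For the concluding interpretation, I would observe that if $X_1,\ldots,X_N$ are mutually independent then the left-hand side of \eqref{eq:genAR} factorises by Fubini and the inequality becomes an equality. Moreover, a scalar centered complex Gaussian $X_j$ is circular-symmetric, so its law is entirely determined by $\mathbb{E}|X_j|^2$; consequently $\mathbb{E}|X_j|^{\alpha_j}$ is a function of the diagonal entry $\mathbb{E}|X_j|^2$ only. Hence the right-hand side of \eqref{eq:genAR} depends only on the prescribed diagonal covariance, and the minimum of $\mathbb{E}\prod_j |X_j|^{\alpha_j}$ over centered complex Gaussian vectors with that diagonal is achieved by the independent model.

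The statement presents no genuine obstacle: all the analytic substance is housed in Theorem \ref{thm:main} and in the intermediate log-plurisubharmonic theorem, and what remains here is essentially a change of language from holomorphic forms on $\C^n$ to Gaussian vectors in $\C^N$. The only minor points to verify are the membership in $\mathcal G$ (immediate, as noted) and the fact that $|X_j| = |F_j(G)|$ allows one to identify the integrals against $\gamma$ with the relevant expectations under the law of $X$.
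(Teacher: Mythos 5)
Your proposal is correct and follows exactly the paper's route: the result is obtained by applying the preceding theorem to the homogeneous linear forms $F_j(w)=w\cdot a_j$ coming from the representation $X=AG$, with the identification $\mathbb{E}\prod_{j\in J}|X_j|^{\alpha_j}=\int\prod_{j\in J}|F_j|^{\alpha_j}\,d\gamma$. Your added remarks (membership in $\mathcal G$, factorisation under independence, and the fact that the law of each scalar $X_j$ is determined by $\mathbb{E}|X_j|^2$) only make explicit what the paper leaves implicit.
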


Inequality (\ref{eq:genAR}) is an extension of an inequality of Arias
de Reyna \cite{Arias}, who established the particular case where all
the $\alpha_{j}=2p_{j}$ are even integer by rewriting the left hand
side in terms of a permanent of a $2m$ matrix ($m=\sum p_{j}$) and
using an inequality for permanents due to Lieb. Actually, Inequality
(\ref{eq:moments}) in the case where the $\alpha_{j}$ are even
integers is equivalent to Lieb's permanent inequality, so in particular
we are giving a new proof of this inequality. 

In the next section we will introduce the tools that will be used
in the proof, that is two semi-groups: the usual Ornstein-Uhlenbeck
semi-group and another natural semi-group associated to the $\overline{\partial}$ operator
(the generator of which could be called, depending from the context, Landau or magnetic
Laplacian). In the last section we give the proof of our correlation
inequality. 


\section{Semi-groups}

To get the result, we will let the circular-symmetric psh function evolve along
the Ornstein-Uhlenbeck semi-group on $\C^{n}=\R^{n}+i\R^{n}\simeq\R^{2n}$
associated with the measure $\gamma$ and the scalar product $:\langle w,w'\rangle=\Re(w\cdot\overline{w'})$.
We recall that its generator is given, for smooth functions $f$, writing $w=x+iy$, $x,y\in\R^{n}$, by
\begin{eqnarray*}
\Lou f(w)&:=&\frac{1}{4}\Delta f(w)-\frac{1}{2}\langle w,\nabla\rangle f(w)\\
&=&\sum_{j=1}^{n}\frac{1}{4}\left(\partial^2_{x_jx_j}f(w) + \partial^2_{y_jy_j}f(w)\right)-\frac{1}{2}\left(x_{j}\partial_{x_{j}}f(w)+y_{j}\partial_{y_{j}}f(w)\right).
\end{eqnarray*}
Note that the normalization differs slightly from the usual one on
$\R^{2n}$ because our Gaussian measure has complex covariance $\textrm{Id}_{n}$
but real covariance equal to $\frac12\textrm{Id}_{2n}$. Accordingly, the
spectrum of $-\Lou$ on $L^{2}(\gamma)$ is here $\frac{1}{2}\N=\{{0,\frac{1}{2},1,\ldots}$\}.
The Ornstein-Uhlenbeck semi-group $\ptLou=e^{t\Lou}$admits
the representation, for suitable functions $f:\C^{n}\to\C$,
\begin{align}
\ptLou f(z) & =\int f(e^{-t/2}z+\sqrt{1-e^{-t}}\,w)\,d\gamma(w)\label{eq:defPtLou}\\
 & =\pi^{-n}(1-e^{-t})^{-n}\int f(w)\,e^{-\frac{1}{1-e^{-t}}|w-e^{-t/2}z|^{2}}\,d\ell(w)\label{eq:defptLou2}
\end{align}

As usual in semi-group methods, it is convenient to work with a nice
stable space of functions. Here, we can for instance consider
\[
\mathcal{G}^{\infty}:=\big\{f\in C^{\infty}(\C^{n})\,;\,f\,\textrm{and all its derivatives belong to }\mathcal{\mathcal{G}\,}\big\}.
\]
Note that for $f\in\mathcal{G}$, we can \emph{define} $\ptLou f$
by (\ref{eq:defPtLou}) and then we have that $(t,z)\mapsto\ptLou f(z)$
is smooth on $(0,\infty)\times\C^{n}$, with $\ptLou f\in\mathcal{G}^{\infty}$
and $\partial_{t}\ptLou f=\Lou\ptLou f$ for every $t>0$, and $\ptLou f\to f$
in $L^{2}(\gamma)$ as $t\to0$. We refer to \cite{BGL} for details.
Let us also mention here that with formula (\ref{eq:defPtLou}) it
is readily checked that properties like convexity, subharmonicity,
pluri-subharmonicity are preserved along $\ptLou$. 

Another natural operator will be used. Indeed, since pluri-subharmonicity
is characterized through a ``$\partial_{z\overline{z}}^{2}$ operation'',
we shall also use the following differential operator on smooth functions $f$ 
on $\C^n$:
\[
Lf=\sum_{j=1}^{n}\Big(\partial_{z_{j}\overline{z_{j}}}^{2}f-\overline{z_{j}}\,\partial_{\overline{z_{j}}}f\Big)=\sum_{j=1}^{n}e^{|z|^{2}}\partial_{z_{j}}\big(e^{-|z|^{2}}\partial_{\overline{z_{j}}}f\big)
\]

Note that $Lf=0$ if (and only if, see below) $f$ is holomorphic.
Formally $L=-\overline{\partial}^{\ast}\overline{\partial}$ on $L^{2}(\gamma)$
equipped with the Hermitian structure $(f,g)=\int f\overline{g}\, d\gamma$.
More precisely, denoting for a differentiable function
\[
\partial_{\overline{z}}f=(\partial_{\overline{z_{1}}}f,\ldots,\partial_{\overline{z_{n}}}f)\quad\textrm{and}\quad\partial_{z}f:=(\partial_{z_{1}}f,\ldots,\partial_{z_{n}}f)
\]
 we have the following standard fact.
\begin{fact}[Integration by parts]
\label{fact:IPP}For regular enough functions $f,g:\C^{n}\to\C$,
for instance for functions in $\mathcal{G}^{\infty}$, we have
\begin{equation}
\int(Lf)\,\overline{g}\,d\gamma=-\int\partial_{\overline{z}}f\,\cdot\,\overline{\partial_{\overline{z}}g}\,d\gamma\label{eq:ipp1}
\end{equation}
 and in particular $\int(Lf)\,\overline{f}\,d\gamma=-\int|\partial_{\overline{z}}f|^{2}\,d\gamma\le0$.
We can also write
\begin{equation}
\int(Lf)\,g\,d\gamma=-\int\partial_{\overline{z}}f\cdot\partial_{z}g\,d\gamma=\int f(\overline{L}g)\,d\gamma\label{eq:ipp2}
\end{equation}
where $\overline{L}f:=\sum_{j=1}^{n}\Big(\partial_{z_{j}\overline{z_{j}}}^{2}f-z_{j}\,\partial_{z_{j}}f\Big)$
.
\end{fact}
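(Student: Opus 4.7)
The plan is to exploit the divergence-form rewriting of $L$ that is already highlighted in the excerpt,
\[
Lf=\sum_{j=1}^{n}e^{|z|^{2}}\partial_{z_{j}}\bigl(e^{-|z|^{2}}\partial_{\overline{z_{j}}}f\bigr),
\]
so that against $d\gamma=\pi^{-n}e^{-|z|^{2}}\,d\ell$ the weight $e^{-|z|^{2}}$ is absorbed into a Lebesgue divergence, and the identities then amount to one Wirtinger integration by parts on $\C^n\simeq\R^{2n}$.

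For the first identity I would write
\[
\int(Lf)\,\overline{g}\,d\gamma=\pi^{-n}\sum_{j=1}^{n}\int\partial_{z_{j}}\bigl(e^{-|z|^{2}}\partial_{\overline{z_{j}}}f\bigr)\,\overline{g}\,d\ell,
\]
integrate by parts in the single real direction needed (equivalently, apply $\partial_{z_j}$ to $\overline{g}$ and note $\partial_{z_{j}}\overline{g}=\overline{\partial_{\overline{z_{j}}}g}$). This gives $-\int\partial_{\overline{z}}f\cdot\overline{\partial_{\overline{z}}g}\,d\gamma$ directly, and the special case $g=f$ yields the stated non-positivity. The second identity, $\int(Lf)g\,d\gamma=-\int\partial_{\overline{z}}f\cdot\partial_{z}g\,d\gamma$, is then obtained by substituting $g\mapsto\overline{g}$ in the first identity and using $\overline{\partial_{\overline{z_{j}}}\overline{g}}=\partial_{z_{j}}g$.

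For the third identity I would start from $-\int\partial_{\overline{z}}f\cdot\partial_{z}g\,d\gamma$ and perform a symmetric integration by parts, this time moving $\partial_{\overline{z_{j}}}$ onto the product $(\partial_{z_{j}}g)\,e^{-|z|^{2}}$. Since $\partial_{\overline{z_{j}}}e^{-|z|^{2}}=-z_{j}e^{-|z|^{2}}$, Leibniz produces the two terms $\partial^{2}_{z_{j}\overline{z_{j}}}g-z_{j}\partial_{z_{j}}g$, which is exactly $\overline{L}g$; summing over $j$ gives $\int f(\overline{L}g)\,d\gamma$.

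The only nontrivial point is the vanishing of boundary terms at infinity in the two Wirtinger integrations by parts. Here I would use the $\mathcal{G}^{\infty}$ hypothesis: $f,g$ and all their first derivatives are bounded by $e^{c|z|^{2-\epsilon}}$ outside a compact set, so each product of the form $(\partial_{\overline{z_{j}}}f)\,\overline{g}\,e^{-|z|^{2}}$ (or its analogues in the second IPP) decays faster than any polynomial. Formally, I would truncate with a smooth radial cut-off $\chi_{R}$ supported in $\{|z|\le R\}$, apply classical Lebesgue-level integration by parts on the compactly supported integrand, and let $R\to\infty$ using dominated convergence; the boundary contribution is $O(R^{2n-1}e^{-R^{2}+2cR^{2-\epsilon}})\to 0$. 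This truncation step is the main (though entirely standard) technical obstacle; once it is in place, the three identities are just bookkeeping with the Wirtinger operators.
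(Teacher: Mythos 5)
Your proposal is correct and follows essentially the same route as the paper: the divergence-form identity $Lf=\sum_j e^{|z|^2}\partial_{z_j}\big(e^{-|z|^2}\partial_{\overline{z_j}}f\big)$ absorbed into $d\gamma$, one Wirtinger integration by parts per coordinate, with the $\mathcal G^\infty$ growth bounds killing the boundary terms. Your cut-off justification and the symmetric integration by parts producing $\overline L g$ are just slightly more explicit versions of what the paper leaves implicit.
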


Indeed, it suffices to sum over $j$ the equations
\[
\int\big[e^{|z|^{2}}\partial_{z_{j}}\big(e^{-|z|^{2}}\partial_{\overline{z_{j}}}f\big)\big]\,\overline{g}\,d\gamma=\pi^{-n}\int\partial_{z_{j}}\big(e^{-|z|^{2}}\partial_{\overline{z_{j}}}f\big)\,\overline{g}\,d\lambda(z)=-\int\partial_{\overline{z_{j}}}f\,\partial_{z_{j}}\overline{g}\,d\gamma=-\int\partial_{\overline{z_{j}}}f\,\overline{\partial_{\overline{z_{j}}}g}\,d\gamma.
\]
The assumption that $f,g\in\mathcal{G}^{\infty}$ guarantees that
the boundary terms (at infinity) in the integration by parts vanish. 

As a consequence, we see that the Gaussian measure $\gamma$ is invariant
for $L$, and actually that $L$ is a symmetric nonpositive operator
on $L^{2}(\gamma)$ with the above-mentioned Hermitian structure.
The kernel of $L$ is the  Bargmann space $\mathcal{H}_{0}$
formed by the holomorphic functions on $\C^{n}$ that belong to $L^{2}(\gamma)$. 

We want to work with the semi-group $\ptL=e^{tL}$ which is also Hermitian
(formally): 

\begin{equation}
\int(P_{t}f)\,\overline{g}\,d\gamma=\int f\overline{P_{t}g}\,d\gamma\label{eq:ptsym}
\end{equation}
Although we will not explicitly use it, let us
discuss a bit the (well known) spectral analysis of $L$ on the complex Hilbert
space $L^{2}(\gamma)$. This analysis is indeed fairly standard using
the ideas introduced by Landau. Following for instance the presentation
given in \cite[Section 4]{charles}, consider the ``annihilation''
operators $a_{j}=\partial_{\overline{z_{j}}}$ and their adjoints,
the ``creation'' operators $b_{j}:=a_{j}^{\ast}=\overline{z_{j}}-\partial_{z_{j}}$. 
Then  $L=-\sum_{j\le n}b_{j}\circ a_{j}$, with $[a_{j},b_{j}]=1$, and 
all these operators commute for distinct indices $j$.
Plainly, if a function $f$ and a scalar $\lambda\in \C$ satisfy $-Lf=\lambda f$, then $-L(a_jf)=(\lambda-1) a_jf$ and 
$-L(b_jf)=(\lambda+1) b_jf$.
This implies that the spectrum of $-L$ is $\N$ and that the
eigenspace associated to the eigenvalue $k\in \N$ is given by the sum of the spaces
$b^{m}\mathcal{H}_{0}$ with $m=(m_{1},\ldots,m_{n})\in \N^n$,
$|m|:=\sum_{j\le n}m_{j}=k$ and the convention $b^{m}:=b_{1}^{m_{1}}\circ\ldots\circ b_{n}^{m_{n}}$.
Moreover, if we introduce the classical projection $\Pi_{0}:L^{2}(\gamma)\to\mathcal{H}_{0}$
onto holomorphic functions

\[
\Pi_{0}f(z):=\int f(w)\,e^{z\cdot\overline{w}}d\gamma(w)=\int f(z+w)\,e^{-\overline{z}\cdot w}d\gamma(w),
\]
then the projector $\Pi_{k}$ on the $k$-eigenspace can be expressed in terms of $\Pi_0$ and the creation and annihilation operators. This allows to compute the reproducing kernel of $\Pi_k$, in terms of classical families of orthogonal polynomials.
Next, one can sum over $k$ and obtain the kernel $K_{t}(z,w)$ for
$e^{tL}=\sum_{k}e^{-kt}\Pi_{k}$. Only the formula for $K_{t}$ will
be useful in the sequel and we shall actually \emph{check} below that
this suggested formula is indeed the kernel of $e^{tL}$. 

 An explicit formula for $K_{t}$ can be found in \cite{AIM}: setting

\begin{equation}
K_{t}(z,w):=\frac{1}{\pi^{n}(1-e^{-t})^{n}}\exp\Big(z\cdot\overline{w}-\frac{e^{-t}|z-w|^{2}}{1-e^{-t}}-|w|^{2}\Big),\label{eq:ptKernel1}
\end{equation}
then
\begin{align}
\ptL f(z) & =\int f(w)\,K_{t}(z,w)d\ell(w)\label{eq:ptKernel2}\\
 & =(1-e^{-t})^{-n}\int f(w)\,e^{z\cdot\overline{w}-\frac{e^{-t}|z-w|^{2}}{1-e^{-t}}}\,d\gamma(w)\nonumber 
\end{align}
Next, let us note that by performing the change of variable $w=z+\sqrt{1-e^{-t}\,}\xi$
for fixed $z$ we find \texttt{
\begin{equation}
\ptL f(z)=\int f(z+\sqrt{1-e^{-t}\,}\xi)\,e^{-\sqrt{1-e^{-t}}\,\overline{z}\cdot\xi}\,d\gamma(\xi).\label{eq:Pt}
\end{equation}
}On this formula, we see immediately that $P_{0}=\textrm{Id and \ensuremath{P_{\infty}=\Pi}}_{0}$. 

To avoid discussions regarding unbounded operators and existence of
semi-groups, we will proceed in the opposite direction and use the previous
formula to \emph{define} $\ptL f$. Actually, to be fair, we should
mention that later, in the proof of our result, we only need to work
with smooth functions $f\in\mathcal{G^{\infty}}$; these functions
provide nice initial data, ensuring existence and uniqueness of strong
solutions for the semi-group equation. Nevertheless, we feel it is
of independent interest to start from the integral formula (\ref{eq:ptKernel2})
or (\ref{eq:Pt}) and derive from it the semi-group properties ; we will first check that~\eqref{eq:ptKernel2} solves~\eqref{eq:semigroup} indeed. The
drawback is that some properties that are obvious (formally) for $e^{tL}f$
need to be checked thoroughly when using this kernel representation, in particular because the kernel~\eqref{eq:ptKernel1} is not Markovian. 

Formulas~\eqref{eq:ptKernel2}-\eqref{eq:Pt} make sense pointwise, for $z\in\C^n$ fixed, as long as $f\in L^2(\gamma)$; actually we have the pointwise estimate $|\ptL f(z)|\le C_t(z) \|f\|_{L^2(\gamma)}$ for some constant $C_t(z)>0$ depending on $t$ and $z$. In order to derive some properties of $P_t f$, a stronger integrability condition (as $f\in\mathcal G$) will be assumed. 

\begin{lem}
\label{lem:defPtL}Given $f\in\mathcal{G}$, let us  \emph{define}
$\ptL f$ using the formula (\ref{eq:ptKernel2}) or (\ref{eq:Pt}).
Then one has that $(t,z)\to\ptL f(z)$ is smooth on $(0,\infty)\times\C^{n}$,
with for any $t>0$, $\ptL f\in\mathcal{G_{\infty}}$ and 
\begin{equation}
    \label{eq:semigroup}
\partial_{t}\ptL f=L\ptL f.
\end{equation}

 Moreover we have
\[
\|\ptL f\|_{L^{2}(\gamma)}\le\|f\|_{L^{2}(\gamma)}
\]
 and $P_{t}f\to f$ in $L^{2}(\gamma)$ as $t\to0$. 
\end{lem}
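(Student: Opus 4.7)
The plan is to derive everything directly from the explicit kernel~\eqref{eq:ptKernel1}. The recurring observation is that the factor $\exp\big(-|w|^2-\frac{e^{-t}|z-w|^2}{1-e^{-t}}\big)$ in $K_t(z,w)$ is genuinely Gaussian in $w$, so for $(t,z)$ in compact subsets of $(0,\infty)\times\C^n$ it dominates every $\mathcal{G}$-type growth of $f$, uniformly. This justifies differentiation under the integral sign with respect to $t$ and to the real variables of $z$ to any order, giving $C^\infty$ regularity of $(t,z)\mapsto\ptL f(z)$ on $(0,\infty)\times\C^n$. The identity~\eqref{eq:semigroup} then reduces to the pointwise algebraic check $\partial_tK_t(z,w)=L_zK_t(z,w)$ (with $L$ acting in the $z$-variable), straightforward from~\eqref{eq:ptKernel1}.

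To show $\ptL f\in\mathcal{G}^{\infty}$, I would work from formula~\eqref{eq:Pt}. Combining $|f(z+\sqrt{1-e^{-t}}\,\xi)|\le e^{C(|z|^{2-\epsilon}+|\xi|^{2-\epsilon})}$ with $|e^{-\sqrt{1-e^{-t}}\,\overline{z}\cdot\xi}|\le e^{|z||\xi|}$, the Gaussian factor in $d\gamma(\xi)$ absorbs the $|\xi|$-contributions and produces a pointwise bound $|\ptL f(z)|\le C_t\,e^{c_t|z|^{2-\epsilon}}$, so $\ptL f\in\mathcal{G}$. For derivatives I would hit the kernel in~\eqref{eq:ptKernel2} rather than $f$: each application of $\partial_{z_j}$ or $\partial_{\overline{z_j}}$ brings down polynomial factors in $z$ and $w$, still absorbed by the Gaussian, so every derivative of $\ptL f$ remains in $\mathcal{G}$.

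The $L^2$-contraction is the main obstacle, since $K_t$ is not Markovian and the naive bound $|\ptL f|\le\ptL|f|$ fails. I would establish it from three ingredients, all derived from the kernel. A direct comparison of exponents yields $K_t(z,w)\,e^{-|z|^2}=\overline{K_t(w,z)}\,e^{-|w|^2}$, whence Fubini gives the Hermitian symmetry~\eqref{eq:ptsym}. A Gaussian convolution computation gives $\int K_t(z,u)\,K_s(u,w)\,d\ell(u)=K_{t+s}(z,w)$, hence the semigroup identity $\ptL P_s=P_{t+s}$. Combining them, $\|\ptL f\|_2^2=\langle P_{2t}f,f\rangle\le\|P_{2t}f\|_2\|f\|_2$ by Cauchy-Schwarz, and iterating, $\|\ptL f\|_2\le\|f\|_2^{1-2^{-n}}\|P_{2^nt}f\|_2^{2^{-n}}$. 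To close, I would obtain an a priori bound of the form $\|P_sf\|_2\le M(s)$ via a Schur-type computation on the kernel: after two Cauchy-Schwarz applications and completion of the square, the integral $\int|K_s(z,w)|\,e^{-\mu|z|^2}\,d\ell(z)$ is Gaussian-decaying in $w$ for $\mu\in(\tfrac{3}{4},1]$, which combined with $f\in\mathcal{G}$ yields $M(s)$ bounded on $[1,\infty)$. Then $M(2^nt)^{2^{-n}}\to 1$, giving $\|\ptL f\|_2\le\|f\|_2$.

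Finally, for the initial condition, formula~\eqref{eq:Pt} with dominated convergence gives pointwise convergence $\ptL f(z)\to f(z)$ as $t\to0^+$ for $f\in C_c^\infty(\C^n)$, which upgrades to $L^2(\gamma)$-convergence by dominated convergence; the contraction just established then extends the $L^2$-convergence to arbitrary $f\in L^2(\gamma)$ by a standard density argument.
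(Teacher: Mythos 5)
Your first step (local uniform Gaussian domination of the kernel and its derivatives, plus the pointwise identity $\partial_tK_t=L_zK_t$) and your treatment of the limit $t\to 0$ are essentially the paper's arguments and are fine. The genuine gap is in the clause $\ptL f\in\mathcal G^{\infty}$. From $|f(z+s_t\xi)|\le e^{C(|z|^{2-\epsilon}+|\xi|^{2-\epsilon})}$ and $|e^{-s_t\overline z\cdot\xi}|\le e^{s_t|z||\xi|}$ you cannot conclude $|\ptL f(z)|\le C_te^{c_t|z|^{2-\epsilon}}$: integrating $e^{s_t|z||\xi|}$ against $d\gamma(\xi)$ costs a factor of order $e^{s_t^2|z|^2/4}$, so your estimate only gives $|\ptL f(z)|\le C_t\,e^{c|z|^{2-\epsilon}+\frac{1-e^{-t}}{4}|z|^2}$, which is not a $\mathcal G$-type bound. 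Moreover this loss is intrinsic to modulus estimates: completing the square in the real part of the exponent of \eqref{eq:ptKernel1} gives exactly $\int|K_t(z,w)|\,d\ell(w)=e^{\frac{1-e^{-t}}{4}|z|^2}$, and the growth is genuinely attained by bounded data. In dimension one, for $a\in\R$ the unimodular function $g_a(w)=e^{a(w-\overline w)}$ satisfies, by \eqref{eq:Pt} and \eqref{laplace}, $P_tg_a(z)=e^{-(1-e^{-t})a^2}\,e^{az-e^{-t}a\overline z}$, hence $|P_tg_a(2a)|=e^{(1-e^{-t})a^2}$; a lacunary superposition $\sum_k c_k\,g_{4^k}$ with rapidly decreasing $c_k>0$ is bounded and continuous (hence in $\mathcal G$), all terms are positive at the points $z=2\cdot 4^k$, and $P_t$ of it grows like $e^{c|z|^2}$ along that sequence. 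So no refinement of a modulus-only argument (and the same applies to your derivative bounds, where the polynomial factors are not the issue) can deliver the subgaussian bound; what the kernel representation really yields, and what the subsequent integrations by parts of Fact~\ref{fact:IPP} actually require, is the weaker estimate $|\partial^{\alpha}\ptL f(z)|\le C\,e^{(\frac{1-e^{-t}}{4}+\delta)|z|^2}$, with the $\mathcal G$-bound recoverable only under extra structure (e.g.\ circular-symmetric $f$, where $\ptL=\ptLou$ by Lemma~\ref{lem:actionONsymm} and Markovianity applies). You should prove and use that weaker bound rather than assert membership in $\mathcal G$ via absolute values.

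On the contraction, your route is correct but genuinely different from the paper's. The paper shows $t\mapsto\|\ptL f\|_{L^2(\gamma)}$ is nonincreasing by differentiating and using $\int(L\ptL f)\overline{\ptL f}\,d\gamma=-\int|\partial_{\overline z}\ptL f|^2\,d\gamma\le 0$, then concludes with a single Cauchy--Schwarz via \eqref{eq:ptsym} and the semigroup property; you instead iterate $\|\ptL f\|_2^2\le\|P_{2t}f\|_2\|f\|_2$ and close with a Schur-type a priori bound. I checked your Schur claim: the Gaussian decay of $\int|K_s(z,w)|e^{-\mu|z|^2}d\ell(z)$ in $w$ requires only $\mu>\frac{1-e^{-s}}{4}$, so it is uniform over $s\ge 1$ and $\mu\in(3/4,1]$, and $M(2^nt)^{2^{-n}}\to 1$ does the job. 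A pleasant feature of your route is that it needs no control on derivatives of $\ptL f$, i.e.\ it bypasses exactly the clause where your write-up is deficient; the paper's route, by contrast, leans on that growth control to justify the integration by parts. Do make explicit the Fubini justifications behind the symmetry and composition identities for $f\in\mathcal G$ (the crude bound $|\ptL f(z)|\le Ce^{c|z|^{2-\epsilon}+\frac{1-e^{-t}}{4}|z|^2}$ noted above suffices for this).
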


\begin{proof}
It is readily checked that, for any fixed $w\in\C^{n}$ one has
\[
\partial_{t}K_{t}(\cdot,w)=L\,K_{t}(\cdot,w).
\]
Moreover, for $T,R,k>0$ fixed, there exists constants $c=c(T,R,k),C=C(T,R,k)$
such that for $F=\partial K_{t}$, or $F=K_t$ or else $F$ being any
of the first $k$th partial derivatives of $K_{t}$, it holds that
$|F(z,w)|\le Ce^{-c|w|^{2}}$ for all $w\in\C^n$, all $t\in(0,T)$ and all $|z|\le R.$
From this and the definition of $\mathcal{G}$, we can call upon dominated
convergence to conclude to the smoothness of $(t,z)\to\ptL f(z)$ and
to the fact that $\ptL f\in\mathcal{G}^{\infty}$ with $\partial_{t}\ptL f=L\ptL f.$

Regarding the contraction property, we want to avoid direct computations or spectral arguments, 
 and so we  make a detour and use some obvious
but important properties of $\ptL$. 

Fist, we will use the semi-group property: for $t,s>0$ that $\ptL\circ P_{s}=P_{t+s}$.
This can be seen in two ways. One can invoke that for a \emph{smooth}
function $F:=P_{t_{0}}f\in\mathcal{G}^{\infty}$ then there is existence
and uniqueness for the equation $\partial_{t}F_{t}=LF_{t}$ with initial
condition $F_{0}=F$ and we have seen that $(P_{t_{0}+t}f)$ is a
solution; from there one can conclude. Or else, in a more pedestrian
way, one can check that
\[
\int K_{t}(z,w)K_{s}(\xi,z)\,d\ell(z)=K_{t+s}(\xi,w).
\]
For this, one may use that given $z,\xi\in\C^{n}$ and $c\in\C$ with $\Re(c)>0$,
\begin{equation}\label{laplace}
\int e^{z\cdot w+\xi\cdot\overline{w}}e^{-c|w|^{2}}\,d\ell(w)=\pi^{n}c^{-n}\,e^{z\cdot\xi\,/c}.
\end{equation}

Next, recall that $\ptL$ is Hermitian, in the sense (\ref{eq:ptsym}),
on $\mathcal{G}\subset L^{2}(\gamma)$; this can be seen directly
from the integral formula since $K_{t}(z,w)=\overline{K_{t}(w,z)}$.
Finally, we will use that $\|\ptL f\|_{L^{2}(\gamma)}$ decreases
for $t\in(0,\infty)$. This is immediate from the non-positivity of $L$ since for
$t>0$ we have $\ptL f\in\mathcal{G}^{\infty}$ and
\[
\frac{d}{dt}\int|P_{t}f|^{2}d\gamma=2 \int(L\ptL f)\overline{\ptL f}\,d\gamma=-2\int|\partial_{\overline{z}}\ptL f|^{2}\,d\gamma\le0.
\]
 So for our $f\in\mathcal{G}$ and $t>0$ we have 
\[
\|\ptL f\|_{L^{2}(\gamma)}^{2}=\int(P_{2t}f)\,\overline{f}\,d\gamma\le\|P_{2t}f\|_{L^{2}(\gamma)}\,\|f\|_{L^{2}(\gamma)}\le\|\ptL f\|_{L^{2}(\gamma)}\,\|f\|_{L^{2}(\gamma)}
\]
 which implies the contraction property in $L^{2}(\gamma)$.

To prove the continuity at $t=0$ in $L^2(\gamma)$ we first assume that $f$ is smooth
and compactly supported. Using (\ref{eq:Pt}) we see that $\ptL f$
converge point-wise to $f$ and that for $t\in(0,1)$ we have $|\ptL f(z)|\le C\,e^{c|z|}$
for some constant $c,C>0$; so we can conclude by dominated convergence.
For $f\in\mathcal{G}$ and $\epsilon>0$, introduce $g$ smooth compactly
supported such that $\|f-g\|_{L^{2}(\gamma)}\le\epsilon$ and let
$\delta>0$ be such that $t\le \delta$ ensures that $\|\ptL g-g\|_{L^{2}(\gamma)}\le\epsilon$ holds.  For $t\le \delta$,
\[
\|\ptL f-f\|_{L^{2}(\gamma)}\le\|\ptL f-\ptL g\|_{L^{2}(\gamma)}+\|\ptL g-g\|_{L^{2}(\gamma)}+ \|g-f\|_{L^{2}(\gamma)}\le2\|f-g\|_{L^{2}(\gamma)}+\epsilon\le3\epsilon.
\]
This establishes the desired continuity. 
\end{proof}

\begin{rem}[Contraction property]
$~$
\begin{enumerate}
    \item We observe that some results, which can be deduced from the spectral decomposition and Hilbertian analysis, may be derived in a soft way thanks to flexible semigroup techniques. 
    We have proved that starting from formula~(\ref{eq:ptKernel2}) we have $\|\ptL f\|_{L^2(\gamma)}\le \|f\|_{L^2(\gamma)}$ on the dense subspace $\mathcal G$, which together with the pointwise estimate given just before the previous Lemma implies by density that
    $$\|\ptL\|_{L^2(\gamma)\to L^2(\gamma)}\le 1.$$
    Formally, by letting $t\to \infty$ in $\|\ptL\|_{L^2(\gamma)\to L^2(\gamma)}\le 1$ we recover that
$$\|\Pi_0\|_{L^2(\gamma)\to L^2(\gamma)}\le 1.$$ Actually, the convergence of $P_t$ towards $\Pi_0$ can be quantified rigorously through H\"ormander's inequality,
$$\|\varphi - \Pi_0\varphi\|_{L^2(\gamma)}^2\le \|\partial_{\overline z} \varphi\|_{L^2(\gamma)}^2=\int (-L\varphi)\varphi\, d\gamma$$
valid for any suitable $\varphi$, for instance for $\varphi\in\mathcal G^\infty$. Note that from formula~\eqref{eq:Pt}, $P_t$ acts as the identity on holomorphic functions, so $\ptL \Pi_0=\Pi_0 \ptL=\Pi_0$. A classical Gr\"onwall type argument (using the previous Lemma to justify the computation of $\frac{d}{dt}\int |P_t (f-\Pi_0 f)|^2\, d\gamma$) ensures that for $f\in\mathcal G$ and $t\ge 0$ 
$$\|\ptL f - \Pi_0 f\|_{L^2(\gamma)}^2\le e^{-2t}\, \|f-\Pi_0 f\|_{L^2(\gamma)}^2
.$$
\item In analogy with the Markovian case $\ptLou$ we may wonder if $P_t$ is also a contraction on some $L^p(\gamma)$.
However, for any $p\neq 2$ we have
    $$\|P_t \|_{L^p(\gamma)\to L^p(\gamma)}=+\infty,$$ 
    as it can be seen by taking, in dimension $n=1$, for $a\in\R$,
    $$f_a(w):=e^{aw+\overline w}, \qquad w\in \C.$$
    Indeed,repeated applications of~\eqref{laplace} with $c=1$  show, setting $s_t:=\sqrt{1-e^-t}$ and using~\eqref{eq:Pt}, that $P_t f_a(z)=e^{s_t^2 a} e^{az+(1-s_t^2)\overline z}$ and  that
    $$\frac{\|P_t f_a\|_{L^p(\gamma)}^p}{\|f_a\|_{L^p(\gamma)}^p}=C(t,p)\,  e^{a\, s_t^2 (p-p^2/2)} .$$
\end{enumerate}

\end{rem}

The next result describes how derivatives and $P_t$ commute, an important issue in semi-group methods. 
\begin{lem}[Commutation relations]
\label{lem:commutation} For any suitable $f$, say $f\in\mathcal{G^{\infty}}$,
and $t>0$ we have for every $1\le j\le n$ and $z\in\C^{n},$
\[
\partial_{z_{j}}(\ptL f(z))=\ptL(\partial_{z_{j}}f)(z)\,\qquad\textrm{and}\qquad\partial_{\overline{z_{j}}}(\ptL f(z))=e^{-t}\ptL(\partial_{\overline{z_{j}}}f)(z).
\]
\end{lem}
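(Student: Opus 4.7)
The natural plan is to differentiate the integral representation~\eqref{eq:Pt} under the integral sign and then re-express the resulting terms via Gaussian integration by parts in the integration variable $\xi$. Set $s:=\sqrt{1-e^{-t}}$, so that
$$\ptL f(z)=\int f(z+s\xi)\, e^{-s\overline{z}\cdot\xi}\, d\gamma(\xi).$$
The growth conditions ($f\in \mathcal G^{\infty}$, hence all partial derivatives belong to $\mathcal G$) and the Gaussian weight control the tails; as in the proof of Lemma~\ref{lem:defPtL}, we may freely differentiate under the integral sign.

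The holomorphic case is straightforward. The factor $e^{-s\overline{z}\cdot\xi}$ is antiholomorphic in $z$, hence annihilated by $\partial_{z_j}$, and the chain rule gives $\partial_{z_j}[f(z+s\xi)]=(\partial_{z_j}f)(z+s\xi)$. Combining these two observations immediately yields $\partial_{z_j}(\ptL f(z))=\ptL(\partial_{z_j}f)(z)$.

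The antiholomorphic case is the main step. Differentiating under the integral produces two terms:
$$\partial_{\overline{z_j}}(\ptL f(z))=\int (\partial_{\overline{z_j}}f)(z+s\xi)\,e^{-s\overline{z}\cdot\xi}\,d\gamma(\xi)\;-\;s\int \xi_j\,f(z+s\xi)\,e^{-s\overline{z}\cdot\xi}\,d\gamma(\xi),$$
the first being $\ptL(\partial_{\overline{z_j}}f)(z)$. The plan is to show that the second equals $-s^{2}\,\ptL(\partial_{\overline{z_j}}f)(z)$, so that the total is $(1-s^{2})\,\ptL(\partial_{\overline{z_j}}f)(z)=e^{-t}\,\ptL(\partial_{\overline{z_j}}f)(z)$, as required. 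To this end, I would apply the elementary Gaussian integration by parts formula $\int \xi_j F(\xi)\,d\gamma(\xi)=\int \partial_{\overline{\xi_j}}F(\xi)\,d\gamma(\xi)$ to $F(\xi):=f(z+s\xi)\,e^{-s\overline{z}\cdot\xi}$. Since $e^{-s\overline{z}\cdot\xi}$ is holomorphic in $\xi$ it is killed by $\partial_{\overline{\xi_j}}$, while the chain rule gives $\partial_{\overline{\xi_j}}[f(z+s\xi)]=s\,(\partial_{\overline{z_j}}f)(z+s\xi)$. Substituting back transforms the second integral precisely into $-s^{2}\,\ptL(\partial_{\overline{z_j}}f)(z)$.

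The only step requiring care is the justification of the integration by parts: we need $F(\xi)\,e^{-|\xi|^{2}}$ (and suitable derivatives) to decay at infinity so that no boundary term appears. This follows from $f\in\mathcal G^{\infty}$ (so that $f(z+s\xi)$ and its derivatives are dominated by $e^{c|\xi|^{2-\epsilon}}$ for $z$ fixed), combined with the extra holomorphic factor $e^{-s\overline{z}\cdot\xi}$ whose modulus grows only like $e^{C|\xi|}$ for fixed $z$. This is the same type of domination argument already invoked for Fact~\ref{fact:IPP} and in the proof of Lemma~\ref{lem:defPtL}, so no new analytic ingredient is needed.
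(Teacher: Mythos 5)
Your proposal is correct and follows essentially the same route as the paper: differentiate the representation \eqref{eq:Pt} under the integral, note the holomorphic case is immediate, and handle the extra term $-s\int \xi_j f(z+s\xi)e^{-s\overline z\cdot\xi}\,d\gamma(\xi)$ by Gaussian integration by parts in $\xi$ (the paper writes this as $\xi_j e^{-|\xi|^2}=-\partial_{\overline{\xi_j}}(e^{-|\xi|^2})$), yielding the factor $1-s^2=e^{-t}$. Your justification of the vanishing boundary terms via $f\in\mathcal G^{\infty}$ matches the paper's implicit use of the same domination argument.
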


\begin{proof}
We use (\ref{eq:Pt}). The first equality is obvious. For the second one, setting $s_t=\sqrt{1-e^{-t}}$,
we have
\[
\partial_{\overline{z_{j}}}(P_{t}f)(z)=P_{t}(\partial_{\overline{z_{j}}}f)(z)-s_{t}\int f(z+s_{t}\xi)\,e^{-s_{t}\,\overline{z}\xi}\,\xi_{j}\,d\gamma(\xi),
\]
and 
\begin{align*}
\pi^{-n}\int f(z+s_{t}\xi)\,e^{-s_{t}\,\overline{z}\xi}\,\xi_{j}e^{-\xi\cdot\overline{\xi}}\, d\ell(\xi)&=-\pi^{-n}\int f(z+s_{t}\xi)\,e^{-s_{t}\,\overline{z}\xi}\:\partial_{\overline{\xi_{j}}}(e^{-\xi\cdot\overline{\xi}})\,d\ell(\xi)\\
&=s_{t}\int(\partial_{\overline{z_{j}}}f)(z+s_{t}\xi)\,e^{-s_{t}\,\overline{z}\xi}\, d\gamma(\xi)=s_{t}\,P_{t}(\partial_{\overline{z_{j}}}f)(z).
\end{align*}
\end{proof}
Now, and for the rest of this section, we focus on the case of circular-symmetric
functions. Given $\theta\in\R$ and a function $f$ we denote $f_{\theta}$
the function $f_{\theta}(w)=f(e^{i\theta}w)$. Note that
\begin{equation}
P_{t}(f_{\theta})=(P_{t}f)_{\theta}.\label{eq:invariance}
\end{equation}
Recall that a function $f$ is said to be circular-symmetric if $f_{\theta}=f$
for every $\theta$. It is worth noting that a holomorphic function
on $\C^{n}$ is necessarily constant when circular-symmetric. Indeed if $h:\C \to \C$ has both properties then invariance and the Cauchy formula give $h(1)=\int_{0}^{2\pi}h(e^{i\theta})d\theta/(2\pi)=h(0)$; next if $f:\C^n \to \C$ is holomorphic and circular symmetric, then for any $(z_1,\ldots,z_n)\in  \C^n$, the function $h:z\in \C \mapsto f(zz_1,\ldots,zz_n)$ is also holomorphic and circular-symmetric, hence $f(z_1,\ldots,z_n)=h(1)=h(0)=f(0)$.
Accordingly
if $f\in L^{2}(\gamma)$ is circular-symmetric then $\Pi_{0}f\equiv\int f\,d\gamma$
since the gaussian density is also circular-symmetric. Actually, much more
can be said, as we shall see below. 

Let us first investigate the relation between $L$ and $\Lou.$ Note
that one can write

\begin{equation}
Lf=\Lou f+\frac{i}{2} \sum_{j=1}^n \big(y_{j}\partial_{x_{j}}f-x_{j}\partial_{y_{j}}f\big).\label{eq:L-Lou}
\end{equation}
So we have $\Lou=\Re(L)=\frac{L+\text{\ensuremath{\overline{L}}}}{2}$
where $\overline{L}f=\sum_{j=1}^{n}\Big(\partial_{z_{j}\overline{z_{j}}}^{2}f-z_{j}\,\partial_{z_{j}}f\Big)$
has a kernel formed by the anti-holomorphic functions. The operators
$L$ and $\overline{L}$ are Hermitian symmetric, whereas $\Lou$
is symmetric for the real and the Hermitian product, and preserves the subspace of real valued functions. 
 As we said,
its spectrum is  $-\frac{1}{2}\N$, as can be seen also from the
formula $\Lou=\frac{L+\text{\ensuremath{\overline{L}}}}{2}$.
 Let us illustrate this on two examples, obtained by applying the creation operator $b_1=\overline{z_1}-\partial_{z_1}$ to the holomorphic functions $z\mapsto 1$, and $z\mapsto z_1$. The function $z\mapsto \overline{z_1}$ is an eigenfunction for $L$ with eigenvalue $-1$, for $\overline{L}$ with eigenvalue $0$, and for $\Lou$ with eigenvalue $-1/2$. The function $z\mapsto |z_1|^2-1$ is an eigenfunction for $L$ with eigenvalue $-1$, for $\overline{L}$ with eigenvalue $-1$, and for $\Lou$ with eigenvalue $-1$.

The special role played by circular-symmetric functions is due to the fact that these operators, and the associated semi-groups, coincide for them. 
\begin{lem}[Action of $L$ and $\ptL$ on circular-symmetric functions]
\label{lem:actionONsymm} If $f$ is a smooth circular-symmetric function,
then we have 
\[
Lf=\overline{L}f=\Lou f.
\]
In particular we have, when $f,g\in\mathcal{G^{\infty}}$ and $f$
is circular-symmetric, 

\[
\int(Lf)g\,d\gamma=\int fLg\,d\gamma=-\int\partial_{z}f\cdot\partial_{\overline{z}}g\,d\gamma.
\]
 Also, if $f\in L^2(\gamma)$ is circular-symmetric then we have 
\[
P_{t} f=\ptLou f
\]
 for every $t\ge0$. 
\end{lem}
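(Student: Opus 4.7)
The plan has three layers: a pointwise identity of generators on smooth circular-symmetric functions, an integration-by-parts identity derived from it and from Fact~\ref{fact:IPP}, and finally a semigroup identity on all of $L^2(\gamma)$. For the pointwise identity, I would differentiate the symmetry relation $f(e^{i\theta}z)=f(z)$ in $\theta$ at $\theta=0$, which yields the Euler-type relation
\[
\sum_{j=1}^n \bigl( z_j\partial_{z_j}f - \overline{z_j}\partial_{\overline{z_j}}f\bigr) = 0.
\]
Since $L$ and $\overline L$ share the second-order term $\sum_j \partial^2_{z_j\overline{z_j}}$ and their first-order parts differ exactly by this combination, we get $Lf=\overline L f$. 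Together with $\Lou=\tfrac12(L+\overline L)$ from~\eqref{eq:L-Lou} this gives the three-way equality.

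For the integration-by-parts identity, Fact~\ref{fact:IPP} already provides $\int(Lf)\,g\,d\gamma=-\int\partial_{\overline z}f\cdot\partial_z g\,d\gamma=\int f(\overline L g)\,d\gamma$. Running the same computation with the roles of $\partial_z$ and $\partial_{\overline z}$ swapped yields $\int(\overline L f)\,g\,d\gamma=-\int\partial_z f\cdot\partial_{\overline z}g\,d\gamma=\int f(Lg)\,d\gamma$. Since $Lf=\overline L f$ for a circular-symmetric $f$, the two left-hand sides coincide and the claimed chain of equalities follows.

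For the semigroup identity I would check that $\ptL f$ and $\ptLou f$ solve the same Cauchy problem on circular-symmetric smooth data. Circular symmetry is preserved by $\ptL$ via~\eqref{eq:invariance}, and by $\ptLou$ via the change of variable $w\mapsto e^{-i\theta}w$ in~\eqref{eq:defPtLou} (using rotation invariance of $\gamma$). Thus for $f\in\mathcal G^\infty$ circular-symmetric, $u(t):=\ptL f\in\mathcal G^\infty$ stays circular-symmetric, and by the first step $\partial_t u = Lu = \Lou u$ with $u(0)=f$; the same problem is satisfied by $v(t):=\ptLou f$. The energy estimate
\[
\tfrac{d}{dt}\|u-v\|_{L^2(\gamma)}^2 = 2\Re\!\int \Lou(u-v)\,\overline{(u-v)}\,d\gamma \le 0
\]
(non-positivity of $\Lou$) then forces $u\equiv v$.

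To reach an arbitrary circular-symmetric $f\in L^2(\gamma)$, I would first truncate to circular-symmetric $f^{(k)}\in\mathcal G$ (say by a radial cutoff) with $f^{(k)}\to f$ in $L^2(\gamma)$, then regularize by $P_s f^{(k)}\in\mathcal G^\infty$, which remains circular-symmetric by~\eqref{eq:invariance}. Applying the smooth case gives $P_{t+s}f^{(k)}=\ptLou(P_s f^{(k)})$; sending $s\to 0$ and using the semigroup property together with the $L^2$-contractivity of $\ptL$ from Lemma~\ref{lem:defPtL} and the $L^2$-boundedness of $\ptLou$ yields $P_t f^{(k)}=\ptLou f^{(k)}$, and then $k\to\infty$ delivers the full identity. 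The main obstacle is this final lifting step: the Euler computation makes the generator identity immediate, but promoting it to a semigroup identity on $L^2(\gamma)$ requires carefully preserving circular symmetry through every approximation and leaning on the contractivity established in Lemma~\ref{lem:defPtL}.
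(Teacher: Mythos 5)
Your first two steps coincide with the paper's own argument: the Euler relation obtained by differentiating $f(e^{i\theta}z)=f(z)$ at $\theta=0$ (the paper writes it in real coordinates, $\sum_j\big(-y_j\partial_{x_j}f+x_j\partial_{y_j}f\big)=0$, which is exactly your complex identity up to a factor $i$) gives $Lf=\overline Lf=\Lou f$ via \eqref{eq:L-Lou}, and the integration-by-parts chain is the same use of Fact~\ref{fact:IPP} and its conjugate. For the semigroup identity, however, you take a genuinely different route. The paper, precisely because $\ptL$ was \emph{defined} by the kernel \eqref{eq:ptKernel1}, verifies the identity directly at the kernel level: it computes the circular averages $\frac{1}{2\pi}\int_0^{2\pi}K_t(z,e^{i\theta}w)\,d\theta$ and $\frac{1}{2\pi}\int_0^{2\pi}K^{\rm ou}_t(z,e^{i\theta}w)\,d\theta$ and shows both equal the same expression involving $B(x)=\sum_n x^n/(n!)^2$; this is an explicit, self-contained computation which immediately yields $P_tf=\ptLou f$ for \emph{every} circular-symmetric $f\in L^2(\gamma)$, since both kernel formulas make sense pointwise on $L^2(\gamma)$. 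You instead make rigorous the ``formally trivial'' route the paper alludes to: both evolutions solve $\partial_t u=\Lou u$ on circular-symmetric $\mathcal G^\infty$ data (using \eqref{eq:invariance} and rotation invariance of $\gamma$ to propagate the symmetry), and a Gr\"onwall/energy estimate based on the non-positivity of $\Lou$ forces coincidence; then a radial truncation plus $P_s$-regularization and the $L^2$-contractivity from Lemma~\ref{lem:defPtL} lift the identity to all of $L^2(\gamma)$. This is correct and stays within the paper's toolkit (the same kind of energy computation appears in the proof of Lemma~\ref{lem:defPtL} and in the Remark), but it buys softness at the price of two layers of justification the kernel computation avoids: differentiating $t\mapsto\|P_t h-\ptLou h\|_{L^2(\gamma)}^2$ under the integral for both semigroups, and identifying the densely-extended $L^2$-bounded operator with the pointwise kernel formula when passing to the limit in $k$; conversely, your argument dispenses with the explicit Bessel-type identity. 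Both are valid; the paper's is shorter and more explicit, yours is more robust and would survive if no closed-form kernel were available.
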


\begin{proof}
Writing $w=x+iy$, $x,y\in\R^{n}$, the symmetry rewrites as $f((\cos(\theta)x-\sin(\theta)y)+i(\cos(\theta)y+i\sin(\theta)x))=f(x+iy)$.
Taking the derivative at $\theta=0$ we find
\[
\sum_{j=1}^{n}\Big(-y_{j}\partial_{x_{j}}f(x+iy)+x_{j}\partial_{y_{j}}f(x+iy)\Big)=0,
\]
 and this for every $x,y\in\R^{n}$. This implies in view of (\ref{eq:L-Lou})
that $Lf=\Lou f=\overline{L}f$. Next, for any smooth function $g$
we have, using that $Lf=\overline{L}f$ and (\ref{eq:ipp2}),
\[
\int(Lf)g\,d\gamma=\int(\overline{L}f)g\,d\gamma=\int fLg\,d\gamma=-\int\partial_{z}f\cdot\partial_{\overline{z}}g\,d\gamma.
\]
Although it is formally trivial that equality of $L$ and $\Lou$
on circular-symmetric functions implies equality of the semi-groups $\ptL$
and $\ptLou$, a bit more should be said since we defined the semi-group
using the explicit formula~\eqref{eq:ptKernel2}. And it is anyway instructive to compute
the kernels on circular-symmetric functions. Denote by $K_{t}^{{\rm ou}}$
the kernel of the Ornstein-Uhlenbeck semi-group that we recalled above:
$K_{t}^{{\rm ou}}(z,w)=\pi^{-n}(1-e^{-t})^{-n}\,e^{-\frac{1}{1-e^{-t}}|w-e^{-t/2}z|^{2}}\,$.
So we have, setting $c_{t}:=e^{-t/2}$ and $s_{t}:=\sqrt{1-e^{-t}}$,
\[
K_{t}^{{\rm ou}}(z,w)=\pi^{-n}s_{t}^{-2n}\,e^{-s_{t}^{-2}|w|^{2}-s_{t}^{-2}c_{t}^{2}|z|^{2}}\,e^{s_{t}^{-2}c_{t}(w\cdot\overline{z}+\overline{w}\cdot z)}
\]
and 
\[
K_{t}(z,w)=\pi^{-n}s_{t}^{-2n}e^{-s_{t}^{-2}|w|^{2}-s_{t}^{-2}c_{t}^{2}|z|^{2}}\,e^{s_{t}^{-2}(c_{t}^{2}w\cdot \overline{z}+\overline{w}\cdot z)}.
\]
 Note that only the last exponentials differ in these two formulas.
When $f$ is circular-symmetric, in order to check that $\ptL f=\ptLou f$
it suffices to check that for fixed $w,z,t$ one has

\[
\frac{1}{2\pi}\int_{0}^{2\pi}K_{t}(z,e^{i\theta}w)\:d\theta=\frac{1}{2\pi}\int_{0}^{2\pi}K_{t}^{{\rm ou}}(z,e^{i\theta}w)\:d\theta.
\]
Observe that for $a,b\in\C$, we have
\[
\frac{1}{2\pi}\int_{0}^{2\pi}e^{ae^{i\theta}+be^{-i\theta}}\,d\theta=\frac{1}{2\pi}\int_{0}^{2\pi}\sum_{p,q\in\mathbb{N}}\frac{a^{p}b^{q}}{p!q!}e^{i(p-q)\theta}d\theta=\sum_{n\ge0}\frac{(ab)^{n}}{(n!)^{2}}=B(ab)
\]
with $B(x):=\sum_{n\ge0}\frac{x^{n}}{(n!)^{2}}$. Therefore, we have
\begin{align*}
\frac{1}{2\pi}\int_{0}^{2\pi}K_{t}(z,e^{i\theta}w)\:d\theta & =\pi^{-n}s_{t}^{-2n}\,e^{-s_{t}^{-2}|w|^{2}-s_{t}^{-2}c_{t}^{2}|z|^{2}}B(s_{t}^{-4}c_{t}^{2}|w\cdot\overline{z}|^{2})\\
 & =\frac{1}{2\pi}\int_{0}^{2\pi}K_{t}^{{\rm ou}}(z,e^{i\theta}w)\:d\theta,
\end{align*}
as wanted. 
\end{proof}
%


\section{Proof of Theorem \ref{thm:main}}

First, let us note that we can assume that $g$ is smooth, and actually
that $g\in\mathcal{G}^{\infty}$. Indeed, if $g\in\mathcal{G}$ then
$\ptLou g\in \mathcal{G}^{\infty}$ and we mentionned that $\ptLou g$
converges to $g$ in $L^{2}(\gamma)$ and therefore also in $L^{1}(\gamma),$
as $t\to0$. Consequently, if we know the conclusion for a function
in $\mathcal{G}^{\infty}$, then
\[
\int f\ptLou g\,d\gamma\ge\int f\,d\gamma\,\int\ptLou g\,d\gamma
\]
and by passing to the limit when $t\to0$ we know it also for $g\in\mathcal{G}.$
For the same reason, we can assume that $f\in\mathcal{G^{\infty}},$
recalling that $\ptLou f$ is circular-symmetric when $f$ is. 

So in the sequel, we are given two psh functions $f,g\in\mathcal{G^{\infty}},$
with $f$ circular-symmetric. 

As in the proof of the correlation for convex functions \cite{Hu},
we will compute some kind of second derivative in $t$ for integrals
involving $\ptLou f$; recall that $\partial_{t}\ptLou f=\Lou\ptLou f$.
The novelty is that, along the way, we will also use $P_{t} f$
which satisfies $\partial_{t}P_{t}f=L\ptL f$ (Lemma \ref{lem:defPtL}). 

Consider
\[
\alpha(t):=\int(\ptLou f)\,g\,d\gamma=\int(\ptL f)\,g\,d\gamma\,\in\R.
\]
The function $\alpha$ is, by construction, smooth on $(0,\infty)$
and continuous on $[0,\infty)$ (see Lemma \ref{lem:defPtL} for the
continuity at zero). 
 Since $\ptLou f$ tends
to the constant $\int f\,d\gamma$ when $t\to\infty$, we have 
\[
\alpha(t)\to\int f\,d\gamma\,\int g\,d\gamma
\]
 In order to conclude, it suffices to show that $\alpha$ decreases. Actually we will prove that $\alpha$ is convex; which is enough, since a convex function with a bounded limit at $+\infty$ cannot increase. It holds
\begin{equation}\label{eq:alphaprime}
\alpha'(t)=\int\Lou(\ptLou f)\,g\,d\gamma=\int L\ptL f\,g\,d\gamma.
\end{equation}
Since $\ptL f$ is also circular-symmetric, we can invoke Lemma~\ref{lem:actionONsymm}
and write
\[ 
\alpha'(t)=-\int\partial_{z}\ptL f\cdot\partial_{\overline{z}}g\,d\gamma.
\] 
 Next, using the first commutation relation from Lemma~\ref{lem:commutation}
we get

\[
\alpha'(t)=-\sum_{j=1}^{n}\int\ptL(\partial_{z_{j}}f)\partial_{\overline{z}_{j}}g\,d\gamma
\]
We stress that $\partial_{z}f$ is no longer circular-symmetric, so we cannot
exchange $\ptL$ and $\ptLou.$ The second derivative of $\alpha$ is, using Fact \ref{fact:IPP},
\[
\alpha''(t)=-\sum_{j=1}^{n}\int L(\ptL(\partial_{z_{j}}f))\partial_{\overline{z}_{j}}g\,d\gamma=\sum_{j=1}^{n}\int\partial_{\overline{z}}(\ptL(\partial_{z_{j}}f))\cdot\partial_{z}(\partial_{\overline{z}_{j}}g)\,d\gamma=\sum_{j,k=1}^{n}\int\partial_{\overline{z_{k}}}\ptL(\partial_{z_{j}}f)\,\partial_{z_{k}\overline{z_{j}}}^{2}g\,d\gamma.
\]
Using the commutation relation from Lemma~\ref{lem:commutation}
we can write

\[
\alpha''(t)=\sum_{j,k=1}^{n}\int\partial_{\overline{z_{k}}z_{j}}^{2}(\ptL f)\,\partial_{z_{k}\overline{z_{j}}}^{2}g\,d\gamma=\int\textrm{Tr}\Big((D_{\C}^{2}\ptL f)(z)\,\,(D_{\C}^{2}g)(z)\Big)\,d\gamma
\]
where for a $C^{2}$ function $h$ on $\C^{n}$ the notation $D_{\C}^{2}h(z)$
refers to the Hermitian matrix $\Big(\partial_{z_{j}\overline{z_{k}}}^{2}h(z)\Big)_{j,k\le n}$.
Since $\ptL f={\ptLou}f$ and $g$ are psh, the corresponding
matrices are nonnegative Hermitian matrices, which means that the
trace of their product is still nonnegative. This shows that $\alpha''\ge0$
and finishes the proof of the theorem. 

\qed

\medskip
We would like to conclude with a discussion of the differences
between the real case and the complex case. After all, we are computing
second derivatives of the same object 
\[
\alpha(t)=\int({\ptLou}f)\,g\,d\gamma
\]
along the Ornstein-Uhlenbeck semi-group exactly as in the case of
convex functions, so what is going on? 

In both cases we prove that $\alpha$ decreases by showing that $\alpha'\le0$
using the next derivative somehow, but we compute these derivatives
differently. The argument for convex function goes as follows. A direct
computation and usual commutation properties show that, if  $\int \nabla f\, d\gamma=0$, which is the case when $f$ is even, then
\[
\alpha'(t)=-e^{-t/2}\int_{t}^{\infty}\Big(\int\textrm{Tr}\Big((D^{2}P_{s}^{{\rm ou}}f)(z)\,\,(D^{2}g)(z)\Big)\,d\gamma(z)\Big)e^{s/2}ds
\]
where $D^{2}$ refers to the usual (real) Hessian on $\R^{2n}$; from
this we conclude to the correlation for convex functions. On the other
hand, we have proved, when $f$ is circular-symmetric, that 
\[
\alpha'(t)=-\int_{t}^{\infty}\Big(\int\textrm{Tr}\Big((D_{\C}^{2}P_{s}^{{\rm ou}}f)(z)\,\,(D_{\C}^{2}g)(z)\Big)\,d\gamma(z)\Big)\,ds.
\]
Note that we have used here that $\alpha'(t)$ tends to 0 when $t\to +\infty$: this follows from the fact that $\alpha$ is convex and has a finite limit at $+\infty$, and can also be seen from  \eqref{eq:alphaprime} since $\ptLou f$ tends to a constant when $t\to +\infty$.
It is because we wanted to work with complex derivatives that we aimed
at inserting $L$ in place of $\Lou$; recall that $\partial_{z_{j}}f$
need not be circular-symmetric even when $f$ is, although the second derivatives
are again circular-symmetric.

Finally, let us observe that if we consider in dimension $1$ the
circular-symmetric psh functions $f(w)=|w|^{1/3}$ and $g(w)=|w|^{4}$ on $\C\simeq\R^{2}$,
then, as we already mentioned, 

\[
\textrm{Tr}\Big((D_{\C}^{2}f)\,\,(D_{\C}^{2}g)\Big)=\Delta f\,\Delta g\ge0,
\]
but a direct computation shows that
\[
{\rm Tr}\Big((D^{2}f)(z)\,\,(D^{2}g)(z)\Big)=-\frac{4}{3}|z|^{1/3}\le0\,\quad\forall z\in\C.
\]
Of course, this discrepancy cannot hold at all times for $\ptLou f$
in place of $f$ (and moreover $f$ is not smooth at zero, although
this is not really an issue). But it suggests that the two formulas
above for $\alpha'(t)$ are indeed quite different. 


\bigskip 

\noindent {\bf Acknowledgements:} We thank Laurent Charles, Vincent Guedj and Pascal Thomas for useful discussions, and for pointing out to us relevent references.

\medskip 

 {\sl Franck Barthe}: Institut de Mathématiques de Toulouse (UMR 5219). Université de Toulouse \& CNRS. UPS, F-31062 Toulouse Cedex 9, France.

{\sl Dario Cordero-Erausquin}: Institut de Mathématiques de Jussieu (UMR 7586), Sorbonne Université \& CNRS, 75252 Paris Cedex 5, France. 

\end{document}